\documentclass[12pt]{amsart}
\usepackage{amssymb}
\usepackage{bm}
\usepackage{graphicx}
\usepackage{psfrag}
\usepackage[usenames,dvipsnames]{xcolor}
\usepackage{enumerate}
\usepackage{multirow}
\usepackage{url}

\usepackage{subcaption}

\usepackage{comment}

\usepackage{algpseudocode}

\usepackage{mathtools}

\usepackage{xy}
\input xy
\xyoption{all}

\usepackage{tikz}

\newcommand{\excise}[1]{}

\newtheorem{thm}{Theorem}[section]
\newtheorem{lemma}[thm]{Lemma}

\newtheorem{cor}[thm]{Corollary}

\theoremstyle{definition}

\newtheorem{remark}[thm]{Remark}

\numberwithin{equation}{section}

\DeclareMathOperator\reg{reg} 
\DeclareMathOperator\height{ht} 

\newtheorem{innerthm}{Theorem}
\newenvironment{customthm}[1]{
  \renewcommand{\theinnerthm}{#1}
  \innerthm
}{
  \endinnerthm
}

\usepackage[backend=biber,style=numeric,  isbn=false, url=false, doi=false]{biblatex}
\usepackage{tikz-cd}

\begin{document}
\mbox{}
\title[Bounds for Regularity of Toric]{Bounds for Regularity of Toric Varieties in Terms of Multiplicity}
\author[Caviglia]{Giulio Caviglia}
\address{Department of Mathematics, Purdue University, 150 N. University Street, West Lafayette, IN 47907-2067, USA}
\email{gcavigli@purdue.edu}

\author[Hilmer]{Kieran Hilmer}
\address{Department of Mathematics, Purdue University, 150 N. University Street, West Lafayette, IN 47907-2067, USA}
\email{hilmerk@purdue.edu}

\subjclass[2020]{Primary: 14M25. Secondary: 13D02, 05E14, 05E40}
\keywords{Prime ideals, Toric ideals, $c$-nomial ideals, Castelnuovo-Mumford regularity, Multiplicity, Eisenbud-Goto conjecture}

\date{\today}

\begin{abstract}
The Eisenbud-Goto conjecture stated that for a homogeneous prime ideal in a polynomial ring over an algebraically closed field, the Castelnuovo-Mumford regularity is bounded above by the multiplicity minus height plus one. In 2017, McCullough and Peeva showed that the regularity of an arbitrary homogeneous prime cannot be bounded by any polynomial function of the multiplicity. For ideals generated by polynomials with at most $c$ terms, we present a bound on the regularity which is less than triple exponential in $c$ and the multiplicity. Furthermore, for toric ideals we get a bound that is smaller than double exponential in terms of multiplicity.
\end{abstract}

\maketitle


\section{Introduction}
\label{sec:intro}

In 1984, Eisenbud and Goto \cite{Eisenbud_Goto_1984} conjectured that, for $P$ a homogeneous prime with no linear forms over an algebraically closed field, \[\reg(P) \leq \deg(P) - \text{height}(P) + 1,\] where $\reg(P)$ is the Castelnuovo-Mumford regularity of $P$ and $\deg(P)$ is the multiplicity of the coordiante ring of $P$ at the homogeneous maximal ideal. This longstanding open problem inspired much work in commutative algebra and algebraic geometry (\cite{Brodmann_Vogel_1993, Eisenbud_Ulrich_2011, Herzog_2003, Hoa_Miyazaki_1995, Lazarsfeld_1987, Pinkham_1986, Niu_2015, Ran_1990}).

In 2017, McCullough and Peeva (\cite{McCullough_Peeva_2017}) found counterexamples to the Eisenbud-Goto conjecture. Moreover, they constructed a family of primes $P_r$ such that the regularity of $P_r$ is exponential in the multiplicity $\deg(P_r)$, so that $\reg(P)$ is not bounded by any polynomial in $\deg(P)$. General bounds on $\reg(P)$ are notably bad. Indeed, the best such bound is given in \cite{caviglia2025explicitstillmanboundsdegrees}: \begin{equation}
    \reg(P) \leq (e^2) \uparrow^{e^4 - 1} (e + 2), \label{horrible bound}
\end{equation} where $e = \deg(P)$ and $\uparrow^{e^4-1}$ indicates $e^4-1$ copies of Knuth's up arrow. Knuth's up arrow can be used to define the Ackermann function, a notorious example of a function which grows faster than any function defined with for-loops and arithmetic operations (\cite[Ch. 12 \S 54]{Kleene_2005}).

Despite the conjecture being false in general, establishing when the inequality holds remains an important open problem. Eisenbud and Goto (\cite{Eisenbud_Goto_1984}) showed that arithmetically Cohen-Macaulay varieties satisfy the inequality, and \cite{Gruson_Lazarsfeld_Peskine_1983} showed that projective curves do as well. The authors in \cite{Lazarsfeld_1987} showed that the conjecture holds for smooth surfaces in characteristic zero. In \cite{Niu_2015}, the author proved the conjecture holds for normal surfaces with rational, Gorenstein elliptic, or log canonical singularities, and later \cite{NIU2022108320} showed a slightly worse bound for threefolds with rational singularities. Conversely, \cite{Choe_2024} constructed a family of counterexamples of any given dimension (at least 3) and height (at least 2) of the defining ideal. Recently, \cite{Han_2024} constructed counterexamples for projective singular surfaces. 

McCullough and Peeva (\cite{McCullough2021TheRC}) identified four ``interesting'' classes for which the conjecture may hold, of which one is projective \textit{toric} varieties (another was projective singular surfaces, for which there are now counterexamples). Given a $d \times n$ matrix $A$, the toric ideal $P_A$ of $A$ is generated by binomials which are in bijective correspondence with elements of $\ker A$. For toric varieties whose coordinate ring is simplicial with an isolated singularity, \cite{Herzog_2003} bounded the regularity by $(d-2)(n-1)$, which implies the conjecture. If the coordinate ring of a toric ideal is seminormal and simplicial, then it decomposes into a direct sum of monomial ideals with easily determined characteristics (\cite{NITSCHE2012345}). For height 2 toric ideals, Peeva and Sturmfels (\cite{Peeva_Sturmfels_1998}) proved the conjecture by reducing to the four variable case. 

The best bound for the regularity of a general toric variety in terms of multiplicity, however, is still \eqref{horrible bound}. In this work we provide a significantly better bound for a slightly more general class of varieties. Call an ideal $c$-nomial if it can be generated by polynomials with at most $c$ terms. We show that over an algebraically closed field, a homogeneous $c$-nomial prime ideal $P$ contains a maximal regular sequence in at most $b = c\frac{e(e^{e}-1)}{e-1}$ variables. Then the bound on regularity in terms of number of variables in \cite{cavigliadestefani2021} yields \[\reg(P) \leq (e-1)^{2^{b-2}+1}.\]  When we specialize to the toric case, we get \begin{customthm}{\ref{toricboundreg}}
    Over an algebraically closed field, a toric ideal $P_A$ with multiplicity $e$ satisfies $$\reg(P_A) \leq e^{e}(e-1).$$
\end{customthm}

\subsection*{Acknowledgments} The first named author was partially supported by a grant from the Simons Foundation (41000748, G.C.). The second named author was partially supported by Irena Swanson and by the Purdue Math Department.

\section{Background}
\label{sec: background}
 Let $k$ be an infinite field and $S$ a polynomial ring over $k$ in some number of variables. Let $P \subseteq S$ be a homogeneous prime ideal of height $h$. Let $e$ be the multiplicity of $S/P$. If $P$ is nondegenerate (i.e. contains no linear forms), then there is a maximal regular sequence in $P$ consisting of forms of degree $e$ (\cite[Lemma 7.2]{caviglia2025explicitstillmanboundsdegrees}). For such a regular sequence $f_1, \dots, f_h$, there is a degree $h(e-1)$ form $f$ such that $(f_1, \dots, f_h) : f$ equals $P$, as in \cite{chardin_2000}. If we assume that $k$ is algebraically closed, then $h \leq e-1$ (\cite[Proposition 5.3]{geometryofsyzygies}).

\begin{remark}
    The previous inequality is false if $k$ is not algebraically closed (\cite{caviglia2026quadraticlinearstrandsprime}). Let $R = \mathbb{R}[x_j, ix_j : 1 \leq j \leq h]$ with $i^2 = 1$ the imaginary unit. Let $S = \mathbb{R}[y_j, z_j : 1 \leq j \leq h]$ and let $P$ be the kernel of the map $S \to R$, $y_j \mapsto x_j$, $z_j \mapsto ix_j$. Then $P$ is a height $h$ homogeneous prime and the multiplicity of $S/P$ is $2$. Thus, the height can be arbitrarily larger than the multiplicity.
\end{remark}

Recall (\cite[Cor 2.13]{cavigliadestefani2021}) that if $\dim S = n \geq 3$ and $I \subseteq S$ is a homogeneous ideal generated in degree at most $D$, then \begin{equation}\label{pdreg}
    \reg(I) \leq D^{2^{n-2}}.
\end{equation}


\begin{remark}
    If we assume that $I = P$ is prime and $k$ is algebraically closed, then \eqref{pdreg} holds for $n = 1, 2$ for trivial reasons. If $n = 1$, then $P$ is either $(0)$ or a linear form. If $n = 2$, then $P$ is (0), principal, or the homogeneous maximal ideal.
\end{remark}

Let $A \in \mathbb{Z}^{d\times n}$ have column vectors $\mathbf{a_1}, \dots, \mathbf{a_n}$. The toric ring of $A$ is $$k[A] = k[t^\mathbf{a_j} : 1 \leq j \leq n] \subseteq k[t_i, t_i^{-1} : 1 \leq i \leq d],$$ and the toric ideal of $A$ is the kernel of the map $\varphi_A: k[x_1, \dots, x_n] \to k[A]$ by $x_j \mapsto t^\mathbf{a_j}$, which is denoted $P_A$. In \cite{Sturmfels_1996}, Sturmfels bounds the maximal degree of a primitive Gr\"obner basis element of a toric ideal. Using the Taylor resolution for the initial ideal of $P$ and upper semi-continuity, one gets \begin{equation} \label{regtoric}
    \reg(P_A) \leq \frac{n}{2}\cdot e\cdot h;
\end{equation} see  \cite[(7.2)]{Peeva_Sturmfels_1998}.

\section{Bounds for Regularity}
\label{sec:boundsupp}

Throughout this section, $k$ is an algebraically closed field, $S$ is a polynomial ring in an unknown number of variables, and $P \subseteq S$ is a nonzero homogeneous prime. Moreover, we will assume that for some fixed $c \in \mathbb{N}$, $P$ is a $c$-nomial ideal i.e. $P$ is generated by polynomials each of which is a $k$-linear combination of at most $c$ monomials. We fix $e \geq 2$ the multiplicity of $S/P$. Our goal is to bound $\reg(P)$ in terms of $e$.  In light of inequality $\eqref{pdreg}$, a bound on the number of variables in a minimal generating set of $P$ yields a bound on $\reg(P)$. This number of variables is bounded by the number of variables appearing in elements in a maximal regular sequence in $P$ because a generating set of $P$ can be algorithmically computed from its maximal regular sequnece using only those variables.
\begin{remark}
    A $c$-nomial ideal need not contain a regular sequence of polynomials with at most $c$ terms, nor the converse. For  
    example, if $\mathfrak{m} \subseteq k[x,y,z]$ is the homogeneous maximal ideal, then the ideal $(y^2-xz) + \mathfrak{m}^3$ contains the regular sequence $x^3, y^3, z^3$, but is not a monomial ideal under any change of variables.
    
    Conversely, toric ideals need not contain a maximal regular sequence consisting of 2-nomials. To see this, first notice that any 2-nomial in a toric ideal $P_A$ can be written as a scalar times a binomial $u - v$ for $u,v$ monomials. Because $P_A$ contains no monomials, if $a$ is a monomial and $a(u - v)$ is a 2-nomial appearing in a regular sequence, we can replace it with $u - v$. That is, any regular sequence of $2$-nomials can be replaced by one of the form $u_1 - v_1, \dots, u_l - v_l$ for $u_i, v_i$ coprime monomials. However, toric ideals of high enough codimension do not have maximal regular sequences of this form (\cite{Cattanicompleteintersections}). For example, the toric ideal of $$A = \begin{bmatrix}
        1 & 1 & \dots & 1\\
        1 & 2 & \dots & 14\\
        1 & 2^2 & \dots & 14^2\\
        \vdots& &\ddots  &\vdots\\
        1 & 2^{10} & \dots & 14^{10}
    \end{bmatrix}$$
    is height 3, but contains no binomial regular sequence of length 3, and hence no $2$-nomial regular sequence of length 3.
\end{remark}

However, the next result provides a characterization. Recall the avoidance lemma for vector spaces: let $k$ be an infinite field, $V$ a $k$-vector space with basis $B$, and $W_i$, $i \leq m$, vector spaces with $V \not\subseteq W_i$ for any $i$. Then a general linear combination of basis elements $b_i \in B \setminus W_i$ will avoid each $W_i$ i.e. there are $a_1, \dots, a_m \in k$ such that \begin{equation}
    \sum_{i=1}^m a_i b_i \in \left(V \setminus \bigcup W_i\right). \label{avoidancelemma}
\end{equation}

\begin{lemma}\label{controlledregseq}
    Let $S$ be a polynomial ring over an algebraically closed field $k$, and let $P \subseteq S$ a homogeneous prime ideal of multiplicity $e \geq 2$. Fix a minimal homogeneous generating set of $P$, and assume that the generators of degree at least 2 each have at most $c$ terms. Let $L$ be the sequence of linear forms, if any, appearing in that generating set. Then there exist variables $x_{i_1}, \dots, x_{i_b}$ with $b \leq c\frac{e(e^{e}-1)}{e-1}$ and a homogeneous maximal regular sequence $L, f_{1}, \dots, f_{h} \in P$ with each $f_i$ a degree $e$ form in $k[x_{i_1}, \dots, x_{i_b}]$.
\end{lemma}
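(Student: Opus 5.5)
First we reduce to the nondegenerate case. Write $L = \ell_1,\dots,\ell_r$. The linear forms in a minimal generating set are linearly independent, so after a linear change of coordinates they become variables; $L$ is then a regular sequence. Passing to $S' = S/(L)$, the image $\bar P$ is a nondegenerate homogeneous prime of the same multiplicity $e$. It is generated by the images of the generators of degree at least $2$. Choose the coordinate change so that the $\ell_i$ are solved for distinct variables $x_{j_i}$. Substituting these into a $c$-term generator can increase the number of terms, which is the main obstacle here. To avoid it, I would keep the original variables and build the $f_i$ as linear combinations of products of the original generators $g$ of degree at least $2$, never substituting. The condition to check is then that $L,f_1,\dots,f_h$ is regular in $S$. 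Since $L$ consists of linear forms, this is equivalent to $\bar f_1,\dots,\bar f_h$ being regular in $S/(L)$. So everything is checked modulo $L$, while the actual polynomials stay in the original variables.

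Next we construct the $f_i$ inductively. I follow the argument behind \cite[Lemma 7.2]{caviglia2025explicitstillmanboundsdegrees}. Suppose $f_1,\dots,f_{i-1}$ have been built with $i-1<h$. The ideal $J=(L,f_1,\dots,f_{i-1})$ has finitely many associated primes $Q_1,\dots,Q_m$, all of height $r+i-1 < \height P$. Hence $P_e \not\subseteq (Q_j)_e$ for each $j$, where $P_e$ is the degree-$e$ component.

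I want the vector space $V \subseteq P_e$ spanned by products $g\cdot u$, with $g$ a minimal generator of degree $d\ge 2$ and $u$ a monomial of degree $e-d$. I also want $V$ to satisfy $V\not\subseteq Q_j$. This needs care, because the $Q_j$ could contain all such products while not containing $P_e$, if $P_e$ involves linear generators. Modulo $L$, however, $\bar P$ is generated by the $\bar g$. So $\bar P_e$ is spanned by the $\bar g\,\bar u$. Since $\bar Q_j \not\supseteq \bar P$ by height, some $g u \notin Q_j$.

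Apply the avoidance lemma \eqref{avoidancelemma} to get $f_i=\sum_{j=1}^m a_j g_j u_j$ with $f_i\notin\bigcup Q_j$. Two points make the variable count small. First, take $g_j\notin Q_j$, which is possible because $Q_j$ is prime and some $g u\notin Q_j$ forces $g\notin Q_j$. Second, choose $u_j$ a power of one variable not in $Q_j$ (possible since $Q_j\ne\mathfrak m$ by height, and if every variable lay in $Q_j$ it would be $\mathfrak m$; otherwise adjust). Then $f_i$ involves at most $c\,m+m$ variables. Actually I would pick $u_j$ among variables already used, to keep the count at roughly $c\,m$.

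Finally we bound $m$. The number of associated primes of $J/(L)$ in $S/(L)$ is bounded by its degree. By Bézout-type bounds, the degree of $(\bar f_1,\dots,\bar f_{i-1})$, a complete intersection of degree-$e$ forms, is $e^{i-1}$. So at step $i$ there are at most $e^{i-1}$ minimal primes, and since complete intersections are unmixed these are all the associated primes. Each step therefore contributes at most $c\,e^{i-1}$ new variables. This sum gives $c(1+e+\dots+e^{h-1})$ plus the variables of the $u_j$. The factor $e$ and the exponent $e$ in $b\le c\,\frac{e(e^e-1)}{e-1}=c(e+e^2+\dots+e^e)$ come from $h\le e-1$ (\cite[Proposition 5.3]{geometryofsyzygies}). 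They also allow up to $e$ variables per prime rather than $c$ alone, for instance by absorbing the $u_j$ variables. The delicate step I expect is arranging the $u_j$ so the count stays within $c\,e^{i}$ per step, and ensuring the whole construction lands in exactly degree $e$. If needed, I would replace $u_j$ by the monomial $g$-free choice $x^{e-d}$ where $x$ is a variable of $g_j$ not in $Q_j$. Such a variable exists since $g_j\notin Q_j$ forces some monomial of $g_j$ outside $Q_j$, and hence all of that monomial's variables are outside $Q_j$. This requires no new variables at all.
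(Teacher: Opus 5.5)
Your construction is the same as the paper's. You avoid the at most $e^{i-1}$ associated primes of $(L,f_1,\dots,f_{i-1})$ with a linear combination of products $g u$, where $g$ is a generator of degree at least $2$, and you check regularity modulo $L$. The gap is in the variable count, which is the real content of the lemma.

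\textbf{The counting error.} You treat a generator $g_j$ with at most $c$ terms as if it involved at most $c$ variables. This leads to your claims that ``$f_i$ involves at most $cm+m$ variables'' and ``each step contributes at most $c\,e^{i-1}$ new variables.'' Both are false, because a $c$-term form of degree $d$ can involve up to $cd$ variables. For a counterexample, take $P=(x_1x_2-x_3x_4)$. Here $c=2$, $e=2$ and $h=1$. Since $P_2$ is one-dimensional, $f_1$ is forced to be a scalar multiple of $x_1x_2-x_3x_4$, which uses four variables. Your count predicts at most $c(1+e+\dots+e^{h-1})=2$. So your account of the target bound is also wrong. The extra factor $e$ in $c\,\frac{e(e^e-1)}{e-1}=c(e+e^2+\dots+e^e)$ is not slack for absorbing the $u_j$. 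It is exactly the room needed for the variables inside the terms of $g_j$.

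\textbf{The repair.} The fix is immediate, and it is the paper's count. Each summand $g_ju_j$ is a degree-$e$ form with at most $c$ terms, each a monomial of degree $e$, so it involves at most $ce$ variables. With your refined choice $u_j=x^{e-d_j}$, where $x$ is a variable of $g_j$ outside $Q_j$, it involves at most $cd_j\le ce$. Hence $f_i$ involves at most $ce\cdot e^{i-1}=ce^i$ variables. Summing over $i\le h\le e-1$ gives at most $c(e+\dots+e^{e-1})\le c\,\frac{e(e^e-1)}{e-1}$.

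\textbf{A smaller point.} You argue ``$\bar Q_j\not\supseteq\bar P$ by height, so some $gu\notin Q_j$.'' This needs a generator of degree at most $e$ lying outside $Q_j$. Height alone only produces some generator outside $Q_j$, possibly of degree greater than $e$, which cannot be multiplied into degree $e$. The missing input is the existence of a maximal regular sequence of degree-$e$ forms in $\bar P$ (the cited Lemma 7.2). This is exactly how the paper justifies $V\not\subseteq W_i$.
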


\begin{proof}
    Let $l = |L|$. Let $\overline{P}$ be the image of $P$ in $S/L$ and let $h = \height(\overline{P})$. As $\overline{P}$ is a nondegenerate prime of the same multiplicity as $P$, $h \leq e-1$ and there is a maximal regular sequence of degree $e$ in $\overline{P}$, and hence in $P$ as well. We construct degree $e$ forms $f_1, \dots, f_h$ such that $L, f_{1}, \dots, f_h$ is a maximal regular sequence recursively.  Because $e \ge 2$, we have $h \geq 1$. Fix $B$ the set of degree $e$ forms obtained by multiplying the minimal generators of degree at least 2 by monomials which are not in $(L)$. If $h = 1$, then let $f_1$ be any element of $B$.
    
    Now, suppose that we have constructed $L, f_{1}, \dots, f_{m}$ with $m < h$. Let $P_1, \dots, P_s$ be the associated primes of $(L, f_{1}, \dots, f_{m})$. Because this is a homogeneous regular sequence, $P_1, \dots, P_s$ all have height $l + m$ (in particular, they are all minimal). The multiplicity of a regular sequence is given by the product of the degrees, which in this case is $e^m$, and each minimal prime contributes to the multiplicity according to the additivity formula (see \cite[Ch. V \S 2]{Serre_2000}). Thus, $s \leq e^m$. Let $V$ be the vector space of degree $e$ forms of $P$, and $W_i$ the space of degree $e$ forms of $P_i$. A basis for $V$ can be obtained from the set of generators of $P$ times monomials of the appropriate degree. Because $P$ has a regular sequence of length $h + l > m + l$ in degree $e$, we know that $V \not\subseteq W_i$. By the avoidance lemma \eqref{avoidancelemma}, there is a \begin{equation}
        f_{m+1} = \sum a_ib_i \in \left(P \setminus \bigcup_{i=1}^s P_s\right), \label{regseqrep}
    \end{equation} where $a_i \in k$ and each $b_i$ a basis element of $V$ avoiding $P_i$. Because each $P_i$ contains $L$, the set of monomials times $L$ is contained in $P_i$, so that each $b_i$ is not in $(L)$ i.e. it is in $B$. 

    We now consider the number of variables used in $f_1, \dots, f_h$, recalling that $h \leq e-1$. Each $f_{m}$ is a sum of at most $e^{m-1}$ polynomials of degree $e$ with support of size at most $c$. Thus, at most $ce^m$ variables occur in $f_{m}$. As such, the elements $f_{1}, \dots, f_{h}$ use at most $$ce + ce^2 + \dots + ce^{h} = c\frac{e(e^{h+1}-1)}{e-1} \leq c\frac{e(e^{e}-1)}{e-1}$$ variables. 
\end{proof}

\begin{remark}
    Suppose in Lemma \ref{controlledregseq} that $L = \emptyset$. Examine the new variables added by $f_{m+1}$ as in \eqref{regseqrep}. For each minimal prime $P_i$, we obtain a $c$-nomial $b_i \notin P_i$. If there is a term of some $b_j$ not divisible by any variable appearing in $f_1, \dots, f_m$, then $b_j$ will avoid all of the minimal primes, so we can take $f_{m+1} = b_j$. Otherwise, each term of each $b_i$ contains a variable already appearing in $f_1, \dots, f_m$. In this case $f_{m+1}$ is a sum of at most $e^{m}$ polynomials of degree $e$ with at most $c$ elements in the support, and each summand adds at most $c(e-1)$ variables. Thus, in the worst case at most $ce^m(e-1)$ new variables occur in $f_{m+1}$. As such, the elements $f_{1}, \dots, f_{h}$ use at most \begin{equation}\label{nondegenbound}
        ce + ce(e-1) + \dots + ce^{h-1}(e-1) = ce^{h} \leq ce^{e-1}
    \end{equation} variables. 
\end{remark}

\begin{cor}\label{pdbound}
    In the notation of Lemma \ref{controlledregseq}, if $P$ is nondegenerate, then the projective dimension of $P$ is at most  $ce^{e-1} - 1$.
\end{cor}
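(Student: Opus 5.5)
We want $\operatorname{pd}(P) \le ce^{e-1}-1$ when $P$ is nondegenerate. The idea is to reduce to a polynomial ring in few variables and then apply Hilbert's syzygy theorem.

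First, since $P$ is nondegenerate we have $L=\emptyset$. The preceding Remark, via \eqref{nondegenbound}, then produces a maximal regular sequence $f_1,\dots,f_h\in P$ of degree $e$ forms. All of them lie in $T=k[x_{i_1},\dots,x_{i_b}]$ with $b\le ce^{h}\le ce^{e-1}$, using $h\le e-1$.

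Second, I show that $P$ is extended from $T$. Put $J=(f_1,\dots,f_h)$. Since $P$ is a minimal prime of $J$ (both have height $h$), it is an associated prime of $JS$. Now $S=T[y]$ is a polynomial extension, with $y$ the remaining variables. The associated primes of $(J\cap T)S$ are exactly the extensions $\mathfrak{p}S$ of the associated primes $\mathfrak{p}$ of $J\cap T$: this holds because $S/\mathfrak{p}S=(T/\mathfrak{p})[y]$ is a domain, and $S$ is flat over $T$ with $\operatorname{Ass}_S(M\otimes_T S)=\bigcup_{\mathfrak{p}\in\operatorname{Ass}M}\operatorname{Ass}(S/\mathfrak{p}S)$. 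Hence $P=\mathfrak{p}S$ with $\mathfrak{p}=P\cap T$ a prime of $T$. Alternatively one can use the paper's remark that generators of $P$ are computed from the regular sequence (via $J:f$ with $f$ as in \cite{chardin_2000}) using only those variables. I expect this step to be the main point requiring care.

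Third, by flatness a minimal free resolution of $\mathfrak{p}$ over $T$ tensors up to a minimal free resolution of $P=\mathfrak{p}\otimes_T S$ over $S$. Therefore $\operatorname{pd}_S P=\operatorname{pd}_T\mathfrak{p}$.

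Finally, Hilbert's syzygy theorem gives $\operatorname{pd}_T(T/\mathfrak{p})\le b$, so $\operatorname{pd}_T\mathfrak{p}\le b-1$. This bound is sharp only if $\mathfrak{m}_T\in\operatorname{Ass}(T/\mathfrak{p})$, which is impossible here unless $\mathfrak{p}=\mathfrak{m}_T$. That case is excluded for $b\ge2$ by nondegeneracy, and the degenerate small cases are handled trivially. Either way we get $\operatorname{pd}(P)\le b-1\le ce^{e-1}-1$.
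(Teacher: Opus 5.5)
Your proof is correct and takes essentially the same route as the paper. You use the Remark to place the regular sequence in $T=k[x_{i_1},\dots,x_{i_b}]$ with $b\le ce^{e-1}$, note that $P$ is extended from $T$ because it is a minimal prime of $(f_1,\dots,f_h)$, and apply Hilbert's syzygy theorem; your identity $\operatorname{pd}_T\mathfrak p=\operatorname{pd}_T(T/\mathfrak p)-1$ does the job of the paper's appeal to Auslander--Buchsbaum and positive depth. Your flat-base-change argument (associated primes under $T\to S$, and minimal resolutions tensoring up) fills in what the paper states in one line, and your closing sharpness discussion is unnecessary, since $\operatorname{pd}_T\mathfrak p\le b-1$ holds in every case.
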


\begin{proof}
     Because $P$ is a minimal prime of $(f_{1}, \dots, f_h)$, we have $P \subseteq k[x_{i_1}, \dots, x_{i_b}]$. By  \eqref{nondegenbound}, $b \leq ce^{e-1}$. The result follows from Hilbert's Syzygy Theorem and the Auslander Buchbaum formula, noting that $P$ has nonzero depth.
\end{proof}

\begin{thm}
    Over an algebrically closed field, a homogeneosu $c$-nomial prime ideal $P$ with multiplicity $e$ satisfies \[\reg(P) \leq (e-1)^{2^{b-2}+1}\] where $b = c\frac{e(e^{e}-1)}{e-1}$ if $P$ is degenerate, or $b = ce^{e-1}$ if $P$ is nondegenerate.
\end{thm}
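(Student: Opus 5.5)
The plan is to reduce to a prime living in at most $b$ variables and then apply \eqref{pdreg}. The degree bound $D$ will come from the colon-ideal description of $P$ recalled in Section~\ref{sec: background}.

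\textbf{Reduction to $b$ variables.} Write $P = (L) + P'$, where $L$ is the sequence of linear forms among the minimal generators, as in Lemma~\ref{controlledregseq}. After a linear change of coordinates, $L$ consists of variables. Then $\reg(P)$ equals $\reg$ of the nondegenerate prime $\overline{P}\subseteq S/(L)$, since $S/P \cong (S/L)/\overline{P}$ and quotienting by linear forms does not change regularity. The image $\overline{P}$ is nondegenerate, prime, and has the same multiplicity $e$.

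Lemma~\ref{controlledregseq} supplies $f_1,\dots,f_h$ of degree $e$ in variables $x_{i_1},\dots,x_{i_b}$. Since $P$ is a minimal prime of $(L,f_1,\dots,f_h)$, it is extended from a prime in the polynomial ring $T$ generated by these variables together with $L$. Modulo $L$, the ideal $\overline{P}$ is therefore extended from a prime $Q$ of a polynomial ring $T'$ in at most $b$ variables. Extension to a larger polynomial ring is flat, so the graded Betti numbers, and hence $\reg(\overline{P})$, are computed in $T'$.

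In the degenerate case I use the count $b = c\frac{e(e^e-1)}{e-1}$ from the Lemma. In the nondegenerate case I use the sharper count $ce^{e-1}$ from \eqref{nondegenbound}.

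The one delicate point is the claim that minimality makes $P$ extended from $T$. My argument: the minimal primes of an ideal extended from $T$ are themselves extended, because $S=T[\text{other vars}]$ is a polynomial extension and extensions of primes are prime. Over an algebraically closed field, the extension of a prime of $T$ to $S$ is again prime. So every minimal prime of $(f_1,\dots,f_h)S$ is of the form $\mathfrak q S$ for a minimal prime $\mathfrak q$ of $(f_1,\dots,f_h)T$. This matches the reasoning the paper already uses in Corollary~\ref{pdbound}.

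\textbf{Degree bound for $Q$.} Next I need $Q$ to be generated in degree at most $D = e-1$ (up to one extra factor). By Section~\ref{sec: background}, after setting $L=0$ we have $Q = (f_1,\dots,f_h):f$ with $\deg f = h(e-1)$. This does not directly bound generator degrees. I will therefore argue instead through \cite[Cor 2.13]{cavigliadestefani2021} in its stronger form, where the bound is in terms of the degrees of the forms defining the ideal. Alternatively, I can use the standard fact that a nondegenerate prime of multiplicity $e$ is set-theoretically, and in fact scheme-theoretically, cut out in degree at most $e$. Gruson–Lazarsfeld–Peskine style reasoning gives generators of $Q$ of degree $\le e$ only after saturation.

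The cleanest route I expect is: $Q$ is the saturation of an ideal generated in degree $\le e-1$ modulo the regular sequence. Then apply \eqref{pdreg} with $D = e-1$ to get $(e-1)^{2^{b-2}}$. Passing to the saturation, via the colon by $f$, costs at most one further factor of $(e-1)$, which produces the exponent $2^{b-2}+1$.

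\textbf{Main obstacle.} The hard step is exactly this justification of the generator degree and the extra factor $e-1$. My first attempt is to apply \eqref{pdreg} to the ideal $(f_1,\dots,f_h)+(\text{degree} \le e-1 \text{ part})$ and then bound $\reg$ of the colon by $\reg$ of the ideal times $(e-1)$. The small cases $b\le 2$ are handled by the Remark following \eqref{pdreg}.
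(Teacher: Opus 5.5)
The first half of your proposal works. Your reduction to a nondegenerate prime $Q$ in a polynomial ring with at most $b$ variables is correct and is essentially the paper's. Invoke Lemma~\ref{controlledregseq} in the original coordinates, before turning $L$ into variables, because being $c$-nomial is not preserved by linear changes of coordinates; your ``span of the $x_{i_j}$ and $L$'' formulation then goes through.

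The genuine gap is everything after the reduction. Inequality \eqref{pdreg} has to be applied to an ideal whose generator degrees you control, and you control nothing about those of $Q$. The colon description $Q=(f_1,\dots,f_h):f$ says nothing about them, and no available result gives $Q$ generated in degree $\le e-1$ or even $\le e$. Scheme-theoretic cut-out of an arbitrary (possibly singular) prime by forms of degree $\le e$ is not a known theorem. It would suffice, since $\reg(I^{\mathrm{sat}})\le\reg(I)$, but only the set-theoretic version is classical, and that is useless for regularity. Your other suggestions also fail. $Q$ is prime, hence already saturated. The regular sequence lives in degree $e$, not $e-1$. There is no general inequality $\reg(I:f)\le(e-1)\reg(I)$: applied to $I=(f_1,\dots,f_h)$ it would bound $\reg(Q)$ by a polynomial in $e$, contradicting McCullough--Peeva.

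The missing idea, which is what the paper does, is to apply \eqref{pdreg} not to $Q$ but to the auxiliary ideal $J=(f_1,\dots,f_h,f)$. Here $f$ is Chardin's form of degree $h(e-1)$ with $(f_1,\dots,f_h):f=Q$. The generator degrees of $J$ are known, and $J$ lives in the same at most $b$ variables. Because $(f_1,\dots,f_h):f=Q$, multiplication by $f$ gives an exact sequence
\[
0\to (S/Q)(-h(e-1))\xrightarrow{\cdot f} S/(f_1,\dots,f_h)\to S/J\to 0.
\]
Since $\reg S/(f_1,\dots,f_h)=h(e-1)$, this yields $\reg(Q)\le\reg(J)-h(e-1)+1$, and \eqref{pdreg} bounds $\reg(J)$. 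Nothing about the generators of $Q$ is ever needed.

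Be warned that this does not produce ``one extra factor of $e-1$''. For $h\ge 2$ the top generator degree of $J$ is $h(e-1)$, so one gets $\reg(Q)\le (h(e-1))^{2^{b-2}}-h(e-1)+1\le (e-1)^{2^{b-1}}$. The middle quantity already exceeds $(e-1)^{2^{b-2}+1}$ when $e=3$, $h=2$ and $b\ge 3$. For $h=1$ one simply has $Q=(f_1)$ and $\reg(Q)=e$.

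The paper's final inequality contains exactly this slip, and the displayed bound is in fact false for $e=2$. The ideal $P=(x_1x_2-x_3x_4)\subseteq k[x_1,\dots,x_4]$ is a $2$-nomial prime with $e=2$ and $\reg(P)=2>1=(e-1)^{2^{b-2}+1}$. So do not try to engineer the printed exponent; the exact sequence gives the bound stated above.
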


\begin{proof}
    Let $L, f_1, \dots, f_h$ be the maximal regular sequence in $P$ given in Lemma \ref{controlledregseq}. If an $l \in L$ uses a variable not appearing in the elements $f_1, \dots, f_h$, say $x_i$, then quotienting by $l$ amounts to substituting out the variable $x_i$, and so does not change the way the elements $f_1, \dots, f_h$ are written, nor the multiplicity or regularity of $P$. Therefore, we may assume that every form in $L$ uses only variables appearing in $f_1, \dots, f_h$ i.e. only the variables $x_{i_1}, \dots, x_{i_b}$. Under this assumption, $P \subseteq k[x_{i_1}, \dots, x_{i_b}]$. We now quotient by the remaining elements in $L$, and assume that $P \neq (0)$ is a nondegenerate prime in a polynomial ring $S$ in at most $b$ variables over $k$. Let $f$ be a degree $h(e-1)$ polynomial such that $P = (f_1, \dots, f_h) : f$, as in \cite{chardin_2000}. The short exact sequence \[0 \to S/P(-h(e-1)) \xrightarrow{\cdot f} S/(f_1, \dots, f_h) \to S/(f_1, \dots, f_h, f) \to 0\] yields that the regularity of $S/P(-h(e-1))$ is bounded above by the max of the regularity of $S/(f_1, \dots, f_h)$, which is $h(e-1)$, and the regularity of $S/(f_1, \dots, f_h, f)$ plus 1. By \eqref{pdreg} this at most $(h(e-1))^{2^{b-2}}.$ Thus, $h \leq (e-1)$ implies \[\reg(P) \leq  (h(e-1))^{2^{b-2}} - h(e-1) + 1 \leq (e-1)^{2^{b-2}+1}.\]
\end{proof}

When $P$ is toric, any linear form in $P$ is of the form $x_i - x_j$, and so by quotienting we may assume that $P$ is 2-nomial and nondegenerate. The bounds \eqref{regtoric} and \eqref{nondegenbound} yield the following corollary.

\begin{thm}\label{toricboundreg}
    Over an algebraically closed field, a homogeneous toric ideal $P_A$ with multiplicity $e$ satisfies $$\reg(P_A) \leq e^{e}(e-1).$$
\end{thm}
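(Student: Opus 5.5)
We combine the Peeva--Sturmfels bound \eqref{regtoric} with the variable count \eqref{nondegenbound}, applied with $c=2$. The plan has three steps: reduce to the nondegenerate case, reduce the number of variables, and then plug into \eqref{regtoric}.

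\emph{Reduction to the nondegenerate case.} Any linear form in $P_A$ is, up to scalar, of the form $x_i - x_j$, which happens exactly when two columns $\mathbf{a_i} = \mathbf{a_j}$ coincide. Deleting duplicate columns of $A$ gives a matrix $A'$ with $S/P_A \cong S'/P_{A'}$. Here $S'$ is the polynomial ring in the surviving variables, and $P_A = P_{A'}S' + (\text{the linear forms})$ after substitution. This identification preserves both the multiplicity and the regularity. So we may assume $P_A$ is nondegenerate, and it is $2$-nomial since toric ideals are generated by binomials. Also $e \geq 2$ unless $P_A=0$, and the statement is trivial in that case.

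\emph{Reduction of variables.} Apply Lemma \ref{controlledregseq} together with the Remark following it (with $L=\emptyset$ and $c=2$). This gives a maximal regular sequence $f_1,\dots,f_h$ of degree $e$ forms in $P_A$ using at most $2e^{h}$ variables, where $h \le e-1$. As in Corollary \ref{pdbound}, $P_A$ is a minimal prime of $(f_1,\dots,f_h)$, so $P_A$ is extended from $k[x_{i_1},\dots,x_{i_b}]$ with $b \le 2e^{h}$.

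We then need this contraction to still be a toric ideal, so that \eqref{regtoric} applies. I expect this to be the main point requiring care. I would argue that $P_A \cap k[x_{i_1},\dots,x_{i_b}]$ is the toric ideal of the submatrix of $A$ on those columns; this holds because toric ideals behave well under restriction to a coordinate subring. The remaining variables are then not involved: $P_A$ is generated by this contraction, so adjoining the extra variables changes neither the regularity nor the multiplicity.

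\emph{Plugging in.} Now \eqref{regtoric} with $n = b \le 2e^{h}$ gives
\[\reg(P_A) \le \tfrac{n}{2}\, e\, h \le e^{h}\cdot e \cdot h.\]
As written this is $e^{h+1}h$, which is at most $e^{e}(e-1)$ when $h\le e-1$. This is because $e^{h+1}h \le e^{e}(e-1)$ follows from $h+1\le e$ and $h \le e-1$.
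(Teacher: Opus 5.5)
Your proposal is correct and is essentially the paper's argument: pass to the nondegenerate case by deleting repeated columns, use the count $b\le 2e^{h}$ from \eqref{nondegenbound}, and insert $n=b$, $h\le e-1$ into \eqref{regtoric}. Your observation that $P_A\cap k[x_{i_1},\dots,x_{i_b}]$ is the toric ideal of the corresponding column submatrix, with unchanged multiplicity, height and regularity, makes explicit a step the paper leaves implicit. There are two small slips, both shared with the paper: linear forms in $P_A$ are only spanned by (not all equal to) binomials $x_i-x_j$ with $\mathbf{a_i}=\mathbf{a_j}$, which does not affect your reduction; and the case $e=1$ is not trivial but must be excluded by the paper's standing assumption $e\ge 2$, since for instance $P_A=(x_1-x_2)$ has $\reg(P_A)=1>0=e^e(e-1)$.
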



\printbibliography

\end{document}